\documentclass[11pt]{article}
\usepackage[a4paper,margin=30mm]{geometry}
\usepackage[T1]{fontenc}
\usepackage{lmodern}
\usepackage{amsmath,amssymb,amsthm,microtype}
\usepackage[hidelinks]{hyperref}
\numberwithin{equation}{section}
\newtheorem{theorem}{Theorem}[section]
\newtheorem{lemma}[theorem]{Lemma}
\newtheorem{corollary}[theorem]{Corollary}
\newtheorem*{theoremA}{Theorem A}
\newtheorem*{theoremB}{Theorem B}
\newtheorem*{theoremC}{Theorem C}

\theoremstyle{remark}
\newtheorem{remark}[theorem]{Remark}
\newtheorem*{problem1}{Problem 1}
\newtheorem*{problem2}{Problem 2}

\newcommand{\N}{\mathbb N}
\title{Minimal asymptotic \(g\)-adic bases from arbitrary partitions}
\author{Min Tang\thanks{Supported by the National Natural Science Foundation of China
(Grant No. 12371003).}\\[3pt]
\small School of Mathematics and Statistics\\
\small Anhui Normal University, Wuhu 241002, P. R. China\\
\small\texttt{tmzzz2000@163.com}}
\date{}
\begin{document}
\maketitle
{\bf Abstract} Let $g>h\ge2$ be integers. For a nonempty set $W\subseteq\N$, let $\mathcal F^*(W)$ denote the set
of its finite nonempty subsets. Let
$$A_g(W)=\left\{\sum_{f\in F}a_fg^f: 1\leq a_f \leq g-1,F \in \mathcal F^*(W)\right\},$$  and let $\N=W_1\sqcup\cdots\sqcup W_h$
be a partition into infinite sets. We prove that
$A=\bigcup_{i=1}^h A_g(W_i)$
is a minimal asymptotic basis of order $h$ for every such partition.

\noindent\textit{2020 Mathematics Subject Classification:} Primary 11B13.\\
\textit{Keywords:} minimal asymptotic basis, digit partition, $g$-adic representation.

\section{Introduction}
Let $\N=\{0,1,2,\ldots\}$. For $A\subseteq\N$ and integer $h\ge2$, put
\[
hA=\{a_1+\cdots+a_h:a_1,\ldots,a_h\in A\},
\]
\[
\mathcal R_h(n;A)=\left|\{(a_1,\ldots,a_h)\in A^h:
 a_1\le\cdots\le a_h,\ a_1+\cdots+a_h=n\}\right|.
\]

 The set $A$ is an \emph{asymptotic basis of order $h$} if
$\N\setminus hA$ is finite. Such a basis is \emph{minimal} if no proper
subset is an asymptotic basis of order $h$, or equivalently, if
\begin{equation}\label{essential}
E_a:=hA\setminus h(A\setminus\{a\})
\quad\text{is infinite for every }a\in A.
\end{equation}

The counting function of the set $A$ is the function $A(x)$ defined as the number of positive elements of $A$ not exceeding $x$. An asymptotic basis $A$ of order $h$ is \emph{thin} if there is a constant $c>0$ such that
$A(x)<cx^{1/h}$ for all $x$ sufficiently large.
It is \emph{strongly minimal} if, for every $a\in A$, there exists
$c_a>0$ such that $E_a(x)>c_a A(x)^{h-1}$ for all $x$ sufficiently large.

For a nonempty set $W\subseteq\N$, let $\mathcal F^*(W)$ denote the set
of its finite nonempty subsets. For an integer $g\ge2$, let
$$A_g(W)=\left\{\sum_{f\in F}a_fg^f: 1\leq a_f \leq g-1,F \in \mathcal F^*(W)\right\}.$$

The study of minimal asymptotic bases dates back to the 1950s and is closely related to the Erd\H{o}s--Tur\'{a}n conjecture on representation functions. Over the past decade, this topic has attracted renewed interest.

In 1988, Nathanson \cite{Nathanson1988} proved the following result.

\begin{theoremA} Let $h \ge 2$, and let $W_i = \{n \in \mathbb{N} \mid n \equiv i \pmod h\}$ for $i = 0, 1, \dots, h-1$. Then the set
\[
A = A_2(W_0) \cup A_2(W_1) \cup \cdots \cup A_2(W_{h-1})
\]
is a thin, strongly minimal asymptotic basis of order $h$.
\end{theoremA}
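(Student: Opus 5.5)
Theorem A has four parts to verify: $A$ is an asymptotic basis of order $h$, it is thin, and it is strongly minimal (which includes minimality). With $g=2$ the only digit is $1$, so $A_2(W_i)$ is the set of positive integers whose binary support is a nonempty subset of $W_i$.

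\textbf{Basis property.} Write $n\ge1$ in binary and let $b_i$ be the part of $n$ supported on $W_i$, so $n=b_0+\cdots+b_{h-1}$. If every $b_i$ is positive, each lies in $A_2(W_i)$ and we are done. If some $b_i=0$, we repair this by splitting a large binary digit of another part: replace a single $2^k$ by $2^{k-1}+\cdots$ chains so that the missing class receives a digit. Alternatively, we could assume $0\in A$ by convention. Since Nathanson's statement is for $A\subseteq\N$ with sums of exactly $h$ terms, I would first check whether $0\in A_2(W_i)$ is intended; the definition uses $\mathcal F^*$, so it is not.

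The repair works as follows. For $n$ large the top bit $2^k$ satisfies $k\ge h$. If $b_i=0$, pick some other class $j$ whose part $b_j$ contains a bit $2^m$ with $m\ge h$. Write
\[
2^m=2^{m-1}+\cdots+2^{m-r}+2^{m-r}.
\]
Choosing the chain carefully moves mass into residue class $i$ without collision. Collisions with existing bits are the technical nuisance, and I would handle them by a case analysis on the lowest bits. This shows that all sufficiently large $n$ lie in $hA$.

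\textbf{Thinness.} Elements of $A_2(W_i)$ up to $x\approx2^k$ correspond to subsets of $W_i\cap[0,k]$. There are about $2^{k/h}$ of these, so $A(x)\ll x^{1/h}$.

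\textbf{Strong minimality.} Fix $a\in A_2(W_i)$ with support $S$. Consider the integers
\[
n=a+\sum_{j\ne i}c_j,\qquad c_j\in A_2(W_j).
\]
Their supports in distinct classes are disjoint, so the binary expansion of $n$ restricted to $W_i$ is exactly $S$.

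The key uniqueness claim is that any representation $n=a_1+\cdots+a_h$ with $a_l\in A$ must, after reordering, consist of one element from each class, with the element from class $i$ equal to $a$. For this I would use a counting or carry argument. The total number of binary $1$'s, $s(n)$, satisfies
\[
s(n)\le\sum_l s(a_l),
\]
with equality iff there are no carries. If two summands came from the same class, or if carries occurred, then some residue class of $n$ would be left unrepresented, or its bits would be forced to come from elsewhere. Using that $n$ has bits in every class (after restricting the $c_j$ to be nonempty), one gets equality and a bijection of classes. This carry analysis is the main obstacle. A cleaner route is to choose the $c_j$ with widely spaced supports, so that every class's bits appear and a pigeonhole argument forces one summand per class.

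Given uniqueness, each such $n$ lies in $E_a$. The number of such $n\le x$ is
\[
\gg\prod_{j\ne i}A_2(W_j)(x/h)\gg A(x)^{h-1},
\]
which gives strong minimality and hence minimality.
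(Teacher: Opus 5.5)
The paper does not prove Theorem~A; it only cites Nathanson. Your strong-minimality strategy is nevertheless the one the paper uses for $g>h$ (Steps~1--3 of the proof of Theorem~\ref{main} and Corollary~\ref{quantitative}), and it fails at exactly the point where $g>h$ is used. Thinness is fine. The basis part is vague but easy to repair, since a representation need not use every class. If $s(n)\ge h$, group the bits of $n$ by residue class and refine to exactly $h$ nonempty single-class groups. If $s(n)\le h\le n$, split powers $2^m=2^{m-1}+2^{m-1}$ until $n$ is a sum of exactly $h$ powers of $2$, each of which lies in $A$.

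The genuine gap is your uniqueness claim, which is false. In Step~2 of the paper, a digit $g-1$ of $n$ at $t_j\in W_j$ forces a summand from $A_g(W_j)$, because carries are at most $h-1<g-1$. In base $2$ the digit to be produced is $1\le h-1$, and a single carry produces it. Take $h=2$, $a=1\in A_2(W_0)$ and $c_1=2^{2m+1}$ with $m\ge1$, placed as far from $a$ as you like. Then
\[
n=1+2^{2m+1}=2^{2m}+(2^{2m}+1),
\]
with both summands in $A_2(W_0)\setminus\{1\}$. So $n\notin E_1$, although $n$ has bits in both classes.

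More generally, let $c\in A_2(W_1)$ have support $T$ and $d_1(c)=0$. Then $1+c=x+y$ with $y=\sum_{t\in T}2^{t-1}$ and $x=1+y$, both in $A_2(W_0)\setminus\{1\}$. Hence for $h=2$, $a=1$, the number $1+c$ lies in $E_1$ only if $d_1(c)=1$. Generic, widely spaced choices of the $c_j$ are therefore exactly the bad ones. Neither the digit-sum inequality $s(n)\le\sum_l s(a_l)$ nor a pigeonhole over classes can force ``one summand per class''.

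What is missing has three parts:
\begin{enumerate}
\item a restricted family of tuples $(c_j)$ in which the bits of $n$ adjacent to the support of $a$ are prescribed, so as to block carry-generated representations;
\item a genuine carry analysis on that family, with carries up to $h-1$ interacting with the interleaved residue classes, proving $\mathcal R_h(n;A)=1$;
\item a check that the family still has $\gg x^{1-1/h}$ members up to $x$.
\end{enumerate}
For $h=2$ one can verify the following: with $k$ the lowest bit of $a$, requiring $k+1$, and also $k-1$ when $k>0$, to lie in the support of the single $c_j$ suffices, and the remaining bits stay free. For general $h$ this step is the real content of Nathanson's theorem. Without it, your final count $\prod_{j\ne i}A_2(W_j)(x/h)$ is unsupported.
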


In \cite{Nathanson1988}, Nathanson posed the following problem.
\begin{problem1}\label{prob1} {\it Characterize the partition $\mathbb{N} = W_1 \sqcup \cdots \sqcup W_h$ such that $A = \bigcup_{i=1}^h A_2(W_i)$ is a minimal asymptotic basis of order $h$. Is $A$ minimal for ``almost all" partitions?}
\end{problem1}

In 2011, Chen and
Chen \cite{Chen} completely characterized the binary problem for $h=2$. Considerable progress has been made on these problems, see
\cite{ChenSandorYang2024,ChenTang2018}.
A general
characterization is still open. Very recently, Chen and Yan \cite{ChenYan2026} gave a characterization of strongly minimal asymptotic bases of this form in the binary case for every $h\ge2$.

For the $g$-adic problem, Ling and Tang \cite{Ling2015} remarked that if $\mathbb{N} = W_0 \sqcup \cdots \sqcup W_{h-1}$ is a partition such that each part \(W_i\) is infinite and contains \(t\) consecutive integers, where $t$ is the least integer satisfying
$t>\max\left\{1,\log h/\log g\right\}$,
then $A=A_g(W_0)\cup\cdots\cup A_g(W_{h-1})$ is a minimal asymptotic basis of order $h$.
Tang and Ling \cite{TangLing2022} later obtained the following complementary
negative result.

\begin{theoremB}
Let $g\ge2$, and let $h$ and $t$ be integers with $2\le t\le \log h/\log g$.
Then there exists a partition
$\mathbb{N} = W_0 \sqcup \cdots \sqcup W_{h-1}$
such that each set $W_r$ is a union of infinitely many intervals of at least
$t$ consecutive integers and
$A=A_g(W_0)\cup\cdots\cup A_g(W_{h-1})$
is not a minimal asymptotic basis of order $h$.
\end{theoremB}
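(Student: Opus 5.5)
The plan is to build an explicit partition and an explicit element $a\in A$, and then show that $E_a$ is finite. The inequality $g^t\le h$ is what will let us absorb one ``forbidden'' summand into the others.

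\emph{Construction.} Write $h=g^t+s$ with $s\ge0$. Partition $\N$ into consecutive blocks and assign them cyclically to $W_0,\dots,W_{h-1}$, with every block of length at least $t$. Arrange in addition that $W_0$ contains $\{0,1,\dots,t-1\}$ and that these positions form a block of exactly $t$ digits. I would take $a=1=1\cdot g^0\in A_g(W_0)$, which is the natural candidate for an inessential element, and aim to prove that every sufficiently large $n$ lies in $h(A\setminus\{1\})$.

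\emph{Key step: representing $n$ without the summand $1$.} Write $n=\sum_k d_kg^k$ in base $g$. For each $r$, let $n_r$ be the part of $n$ supported on $W_r$, so that $n=\sum_r n_r$. If every $n_r$ is nonzero, its nonzero digits form an element of $A_g(W_r)$, and we get a representation with exactly $h$ summands. The trouble comes when some $n_r=0$: then the free summand must be supplied by splitting another $n_{r'}$ into two elements of $A_g(W_{r'})$, or by borrowing via carries. The dangerous case is a representation that is forced to use the summand $1$.
\begin{itemize}
\item Suppose the lowest block value $v=\sum_{k<t}d_kg^k$ is $1$, or more generally $n_0$ must be split. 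Because $n$ is large, $n_0$ or some other $n_r$ has a nonzero digit in a high position $m$.
\item We then rewrite a digit $d_m\ge1$ at position $m$ as $(d_m-1)$ at position $m$ plus $g^m$. This $g^m$ is redistributed, via $g^m=(g-1)g^{m-1}+\cdots+(g-1)g^{0}+1$ or via partial versions of this carry, among elements supported on the appropriate $W_r$'s.
\item Each block has length at least $t$, and a block of length $t$ has values below $g^t\le h$. Hence the number of summands needed to cover the created digits in each part, together with the low block, never exceeds $h$. Concretely, the low block value $v<g^t\le h$ can be written as a sum of at most $v$ elements of $A_g(W_0)$ that each carry extra high digits of $W_0$, so none of them equals $1$.
\end{itemize}

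\emph{Counting.} The bookkeeping then checks that the total number of summands equals exactly $h$. If there are too few summands, any nonzero $n_r$ with at least two nonzero digits can be split into two elements of $A_g(W_r)$. If there are too many, the inequality $g^t\le h$ is exactly what caps the count. This splitting step works because $n$ large guarantees that every part $W_r$ which is used has several nonzero digits available, possibly after the carry manipulation above.

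\emph{Main obstacle.} The hardest step is this case analysis. We must guarantee, uniformly for all large $n$, that the summand count can be adjusted to exactly $h$ without ever producing the element $1$. The delicate situations are those where $n$ has very few nonzero digits, for example $n=g^m+1$. Here I would first try a carry from the top digit down into a full block of $W_0$, using $g^t\le h$ to split the resulting block into enough pieces. If this fails for the cyclic block construction, I would refine the partition: make all blocks of $W_0$ except the first very long, so there is room to place such carries.
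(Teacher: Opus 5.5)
\textbf{Verdict: there is a genuine gap, and the plan fails for the construction you propose.} The paper only quotes Theorem~B from Tang and Ling, so there is no in-paper proof to compare with; the assessment below is of the proposal on its own terms.

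The step everything rests on is that every sufficiently large $n$ lies in $h(A\setminus\{1\})$, and it is never proved. The ``bookkeeping'' and ``counting'' paragraphs are assertions. The partition itself is left open, since you reserve the right to refine it. You also flag the decisive case analysis as unresolved.

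The concrete mechanisms do not hold up either.
\begin{itemize}
\item Writing the low block value $v$ as at most $v$ elements of $A_g(W_0)$ that ``carry extra high digits of $W_0$'' ignores that those extra digits change the sum. They must be matched by digits of $n$ on $W_0$ or cancelled by carries. In the critical case the $W_0$-part of $n$ is exactly $1$, so they must be paid for by borrowing from digits of other parts, and borrowing costs summands.
\item The inequality $g^t\le h$ does not ``cap the count''. It only says that a carry of size $g^t-1\le h-1$ is admissible, so a block of $t$ digits equal to $g-1$ stops working as a marker in the Ling--Tang argument. It says nothing about producing such a carry cheaply, and in general doing so is expensive.
\end{itemize}

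\textbf{A counterexample to your construction.} Take $g=2$, $t=2$, $h=4$ and all blocks of length $2$, so $W_r=\{2r,2r+1\}+8\N$. For $u\in\N$ put $N=8u+2$ and
\[
n=1+3\cdot2^{N}+3\cdot2^{N+2}+3\cdot2^{N+4}.
\]
This is $1$ plus one element from each of $A_2(W_1),A_2(W_2),A_2(W_3)$; for $u=0$ it is $253=1+12+48+192$. Suppose $n=x_1+\cdots+x_4$ with all $x_p\in A\setminus\{1\}$.
\begin{itemize}
\item The units digit forces a summand in $A_2(W_0)$. No position of $[N,N+6)$ lies in $W_0$, and every $x_p<2^{N+6}$, so each $W_0$-summand equals its residue mod $2^N$.
\item Write $\sum_p(x_p\bmod 2^N)=1+c\,2^N$ with $0\le c\le3$. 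If $c=0$, the $W_0$-summand equals $1$, a contradiction.
\item If $c\ge1$: summands from $W_1,W_2,W_3$ have residues below $2^{N-2}$, because $N-2,N-1\in W_0$. Hence at least $c$ summands lie in $A_2(W_0)$.
\item Each full block $\{N+2,N+3\}$ and $\{N+4,N+5\}$ needs a summand from its own part. A carry of $3$ into it would require at least four summands from the part just below, leaving no room for the $W_0$-summand.
\item The block $\{N,N+1\}$ has no carry coming in and must supply $3-c$, so it needs a $W_1$-summand unless $c=3$.
\item This gives at least five summands, unless $c=1$ and there is exactly one summand in each part. In that case no carries occur at all, contradicting $c=1$.
\end{itemize}
Hence $n\in E_1$ for every $u$, and $1$ is essential.

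\textbf{This is not special to the example, and your fallback does not help.} The same argument, applied to $1$ plus the full blocks of $W_1,\dots,W_{h-1}$ sitting directly above a $W_0$-block, works whenever $h=g^t$ and all blocks have length at least $t$. It only uses that the block immediately below $N$ belongs to $W_0$ and has length at least $t$. So lengthening the $W_0$-blocks does not rescue the construction.

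A correct proof needs a different choice of the inessential element and of the block geometry around its support. It also needs a rewriting that succeeds without relying on slack in the other summands.
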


Combining this theorem with the preceding positive criterion, Tang and Ling showed that if
$\mathbb N=W_1\sqcup\cdots\sqcup W_h$
satisfies the condition that, for every $i$ and every $w\in W_i$, either
$w-1\in W_i$ or $w+1\in W_i$, then the associated $g$-adic set is minimal
for $2\le h<g^2$. For $h\ge g^2$, however, there exist partitions satisfying
the same condition for which minimality fails.

Sun and Cheng \cite{SunCheng2022} proved a complementary existence result.

\begin{theoremC}
Let $t$ be a positive integer, and let $g,h\ge2$. Then there exists a partition
\[
\N=W_0\sqcup W_1\sqcup\cdots\sqcup W_{h-1}
\]
such that each set $W_i$ is a union of infinitely many intervals of $t$
consecutive integers and
\[
A=A_g(W_0)\cup A_g(W_1)\cup\cdots\cup A_g(W_{h-1})
\]
is a minimal asymptotic basis of order $h$.
\end{theoremC}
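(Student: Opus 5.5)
The plan is to reduce Theorem C to the positive criterion of Ling and Tang \cite{Ling2015} quoted above. That criterion says $A$ is minimal as soon as every part is infinite and contains $s$ consecutive integers, where $s>\max\{1,\log h/\log g\}$.

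First I fix an integer $s$ that is a multiple of $t$ and satisfies $s>\max\{1,\log h/\log g\}$; for instance $s=t\lceil (1+\log h/\log g)/t\rceil$ works. I then cut $\N$ into consecutive blocks $B_j=[js,(j+1)s-1]$ for $j\ge0$, and put $B_j$ into $W_r$ exactly when $j\equiv r\pmod h$. This gives a partition $\N=W_0\sqcup\cdots\sqcup W_{h-1}$ with the following properties:
\begin{itemize}
\item each $W_r$ is the union of the infinitely many blocks $B_j$ with $j\equiv r\pmod h$;
\item since $t\mid s$, each block $B_j$ splits into $s/t$ intervals of $t$ consecutive integers, so each $W_r$ is a union of infinitely many such intervals;
\item each $W_r$ contains $s$ consecutive integers, and in fact infinitely many such runs.
\end{itemize}
The criterion then yields minimality of $A$.

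If the remark in \cite{Ling2015} is not to be used as a black box, I would prove it directly for this concrete partition in two steps.

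\emph{Basis property.} Take $n$ large and write its base-$g$ expansion. Group the digits according to which $W_r$ their positions lie in; this writes $n=\sum_r c_r$ with each $c_r\in A_g(W_r)\cup\{0\}$. A part $c_r$ can vanish only if all digits of $n$ in $W_r$ below the top digit are zero. In that case I borrow: replace $1\cdot g^{m}$, taken from a nonzero digit at a higher position $m$, by $(g-1)g^{m-1}+\cdots+(g-1)g^{m'}+g^{m'}$ down through a block of $W_r$. This creates nonzero digits in $W_r$ while keeping every other part nonzero. Because $n$ is large, there are enough blocks to do this for every $r$ at once.

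\emph{Minimality.} Fix $a=\sum_{f\in F}a_fg^f\in A_g(W_i)$. For infinitely many far-out blocks, choose elements $b_k\in A_g(W_k)$ for $k\ne i$, supported on those blocks, with all digits equal to $g-1$. Set $n=a+\sum_{k\ne i}b_k$, so that $n$ has no carries.

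The main obstacle is to show that every representation $n=x_1+\cdots+x_h$ with $x_j\in A$ uses $a$. The argument is a carry analysis. A sum of $h$ digits, each at most $g-1$, produces a carry of at most $h-1$ into the next position. Over a run of $s$ consecutive positions, the largest value that can leak in from below is smaller than $g^{s}$, because $g^s>h$. Hence the digits of $n$ on each full block can only be produced by summands supported in that block's part. Each $W_k$ with $k\ne i$ must therefore be covered by the corresponding summand. This forces one summand to lie in $A_g(W_k)$ for each $k\neq i$, leaving exactly one summand to produce the $W_i$-digits of $n$, which are precisely the digits of $a$. So that summand equals $a$, and $n\in E_a$. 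Since infinitely many far blocks are available, $E_a$ is infinite.

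I expect the delicate part to be controlling carries across block boundaries, especially near the low-order positions where $a$ lives.
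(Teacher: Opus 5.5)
Theorem~C is only quoted in the paper, so there is no proof to match, and your proposal has to be judged on its own. Your black-box reduction is a valid deduction, but only under the loosest reading of the statement, in which adjacent $t$-intervals may merge. The maximal runs of each $W_r$ are your blocks of length $s$, and whenever $t\le\max\{1,\log h/\log g\}$ you are forced to take $s>t$. Read that way, Theorem~C is a one-line corollary of the Ling--Tang remark for every $t$. Theorem~C is presented as the complement of Theorem~B (where $2\le t\le\log h/\log g$), so the natural reading is that each $W_r$ consists of separated blocks of exactly $t$ consecutive integers. For $t\le\log h/\log g$ your partition does not have this property, and no reduction to the criterion is possible. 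No part then contains $s$ consecutive integers. Moreover, since $g^t\le h$, a carry of $g^t-1\le h-1$ entering one block can fill it with digits $g-1$, so a single block no longer stops carries. That regime needs a genuinely different idea; compare Nathanson's Theorem~A (the case $g=2$, $t=1$, blocks arranged cyclically). For $g>h$ the statement follows from Theorem~\ref{main}.

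Independently, your self-contained minimality argument has a genuine gap. From $q_k\ge1$ for all $k\ne i$ you conclude that the remaining summand produces the $W_i$-digits of $n$. It can just as well be a second element of some $A_g(W_k)$, with the digits of $a$ generated by carries from below. Your carry-blocking argument does not exclude this, and with $b_k$ supported only on far blocks it actually happens. Take $g=h=2$, $t=1$, $s=2$, so that $W_0=\{0,1,4,5,8,9,\dots\}$ and $W_1=\{2,3,6,7,\dots\}$. Let $a=2^2\in A_2(W_1)$ and $b_0=2^8+2^9$. Then $n=772=(2^9+2^8+2^0)+(2^1+2^0)$ with both summands in $A_2(W_0)$, so $n\notin E_a$, and the same happens for every choice of far blocks.

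The missing ingredient is the low-digit padding in \eqref{bj}: add $(g-1)\sum_{r<k,\,r\in W_j}g^r$ to each $b_j$, where $k$ is the lowest nonzero digit position of $a$. If $q_i=0$, exactly one $q_j$ equals $2$. Below position $k$ the digit sums are then $0$ on $W_i$, and at most $2g-2$ against a digit $g-1$ of $n$ elsewhere. Hence the carries stay $0$ up to $k$, and $d_k(n)=d_k(a)>0$ cannot be produced.

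The only use of $g>h$ in Steps~2--3 of the proof of Theorem~\ref{main} is the single-marker bound $c_{t_j}\ge g-1>h-1$. Your blocks with $g^s>h$ replace it, since a marker block forces a carry $c\ge g^s-1\ge h$ at its bottom. With the padding added, the rest goes through verbatim and gives a complete proof of the criterion for your partition.

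Finally, the basis property needs no borrowing argument; it is Lemma~\ref{basis}(c). As sketched, your borrowing step also does not control the digits it adds at intermediate positions.
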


Taking $t=1$ in Theorem~C shows that, for every $g,h\ge2$, there exists a
partition of $\N$ into $h$ infinite sets for which the associated $g$-adic set
is a minimal asymptotic basis of order $h$.

More recently, Bai and Sun \cite{BaiSun2026} considered the corresponding $g$-adic problem. In this setting it is natural to ask the following analogue of Problem~\ref{prob1}.
\begin{problem2}\label{prob2} {\it Characterize the partition $\mathbb{N} = W_1 \sqcup \cdots \sqcup W_h$ such that $A = \bigcup_{i=1}^h A_g(W_i)$ is a minimal asymptotic basis of order $h$. Is $A$ minimal for ``almost all" partitions?}
\end{problem2}

For related results, see
\cite{ChenTang2026, Fang2026, JiaNathanson1989, LingTang2018, NathansonGadic, SunTao2019}.
In this paper, we show that a much
stronger statement holds when $g>h$: every partition of $\N$ into $h$ infinite
sets has this property, with no further condition on the parts of the
partition.
\begin{theorem}\label{main}
Let $g>h\ge2$ be integers, and let
$$\mathbb{N} = W_1 \sqcup \cdots \sqcup W_h, \quad |W_i|=\infty (1\leq i\leq h).$$
Put $A=\bigcup_{i=1}^{h}A_g(W_i)$.
Then $A$ is a minimal asymptotic basis of order $h$.
Moreover, for every $a\in A$, there are infinitely many integers $n\in E_a$ such that $\mathcal R_h(n;A)=1$.
\end{theorem}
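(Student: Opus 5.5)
The plan has two parts: first show that $A$ is an asymptotic basis of order $h$, then show minimality by constructing, for each $a\in A$, infinitely many $n$ with $\mathcal R_h(n;A)=1$ whose unique representation uses $a$. Such $n$ lie in $E_a$ automatically, so the second part gives both the minimality (via \eqref{essential}) and the "moreover" clause.

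\emph{Basis property.} Since every $W_i$ is infinite, I fix $K$ such that each $W_i$ meets $\{0,\dots,K\}$. For $n\ge g^{K+h+1}$ I start from the base-$g$ expansion of $n$. I then produce a generalised expansion
\[
n=\sum_f c_fg^f,\qquad 0\le c_f\le h(g-1).
\]
To get it, I borrow $1$ from the leading digit and redistribute it downward, using
\[
g^m=(g-1)g^{m-1}+\cdots+(g-1)g^{k}+g^{k}.
\]
This yields at least $h$ nonzero "units" $c_f$, and the large coefficients can be placed in the low window $\{0,\dots,K\}$ as needed.

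Each $c_f$ with $f\in W_i$ is then cut into at most $h$ pieces in $[1,g-1]$. Pieces lying in the same class $W_i$ and at distinct positions are grouped into elements of $A_g(W_i)$. I arrange the grouping so that exactly $h$ nonzero summands result. Pieces can be split further, since $c_f\ge2$ splits as $1+(c_f-1)$, or merged, as long as at each position at most $g-1$ is used per summand. Because $h<g$, each $c_f\le h(g-1)$ fits into $h$ summands. This part is routine bookkeeping once the borrowing step is set up.

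\emph{Minimality and uniqueness.} Fix $a\in A_g(W_i)$ with support $F$ and $\max F<L$. Choose a class $W_j$; when $h\ge2$ one can take $j\neq i$, or $j=i$ with positions far apart. Pick positions $u_1<\dots<u_{h-1}$ in $W_j$ with $u_1>L$ and huge gaps $u_{k+1}-u_k$. Set
\[
n=a+g^{u_1}+\cdots+g^{u_{h-1}},
\]
which has no carries, so the expansion of $n$ is just the juxtaposition. Varying the $u_k$ gives infinitely many such $n$. I must show that every representation $n=x_1+\dots+x_h$ with $x_k\in A$ is this one, up to order.

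The tool is the carry bound. A column receives at most $h$ digits, each $\le g-1$, plus an incoming carry. By induction the carry is always $\le h-1<g$ (here $g>h$ is used). Hence at each position, (sum of summand digits) $+$ (carry in) $=$ digit of $n$ $+\,g\cdot$(carry out), with all carries in $[0,h-1]$. I then argue top-down. Above $u_{h-1}$ all digits of $n$ vanish, and $x_k\le n$, so no summand has support above $u_{h-1}$. Inside each long zero gap of $n$, a nonzero carry must be sustained by summand digits at every position of the gap. A carry can only die if the column sum plus carry in is $0$; it can only stay alive if that quantity is exactly $g\cdot$(carry out). Each $x_k$ lives in a single class, and the gap positions need not all be in one class, so I expect to show that a carry cannot cross a long gap without forcing some summand to have nonzero digits at a specific positional pattern.

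From this I conclude that the $1$ at each $u_k$ is produced directly by a single digit $1$ of some summand, with no carries. Digit-sum counting, $s_g(n)\le\sum s_g(x_k)$ with equality iff there are no carries, then forces each remaining summand to be exactly $g^{u_k}$ or $a$.

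\emph{Main obstacle.} The hardest step is the carry analysis across the gaps. The partition is arbitrary, so I cannot control which class the gap positions belong to. If needed, I will choose the $u_k$ so that the positions immediately below each $u_k$ contain positions of $W_j$ as well as other classes, which is possible since every class is infinite. I will also use the pigeonhole fact that a carry at most $h-1$ propagating through $g>h$ columns of a zero block requires column sums $\ge g-(h-1)\ge1$ in every column. That requirement forces a summand with support in those columns. Combining this with $x_k\le n$ and the count of exactly $h$ summands, one of which must be the one covering $F$, should yield the contradiction.
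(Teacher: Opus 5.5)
Your minimality argument has a genuine gap, and it lies in the construction itself rather than in the unfinished carry analysis. In general your integers $n=a+g^{u_1}+\cdots+g^{u_{h-1}}$ are \emph{not} in $E_a$. Two independent things go wrong.

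(i) All markers lie in the one class $W_j$, so two of them can be merged into a single summand. This frees a slot to split $a$ with no carries at all. Suppose $h\ge3$ and, say, $a=2g^f$ (possible since $g\ge3$). Then
\[
n=g^f+g^f+\bigl(g^{u_1}+g^{u_2}\bigr)+g^{u_3}+\cdots+g^{u_{h-1}}
\]
is a representation by $h$ elements of $A\setminus\{a\}$, for every partition and every choice of $j$ and of the $u_k$. The same works for $a=g^{f_1}+g^{f_2}$. So your closing digit-sum step, which is meant to force each summand to be $a$ or some $g^{u_k}$, is false even in the carry-free case.

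(ii) A digit $1$ is exactly what a carry can fake, and the carry is created in the single column $u_k-1$, so no long gap ever has to be crossed. Take $h=2$, let $W_1$ be the even and $W_2$ the odd numbers, and let $a\in A_g(W_1)$. With $j=2$, every admissible $u_1$ has $u_1-1\in W_1$ and $u_1-1>\max F$, and
\[
n=a+g^{u_1}=\bigl(a+(g-1)g^{u_1-1}\bigr)+g^{u_1-1},
\]
where both summands lie in $A_g(W_1)\setminus\{a\}$. With $j=1$ and $a=g^f$, $f\ge2$, one has $n=(g-1)\bigl(g^{f-1}+g^{u_1-1}\bigr)+\bigl(g^{f-1}+g^{u_1-1}\bigr)$, with both summands in $A_g(W_2)$. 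So the claim that the $1$ at each $u_k$ comes from a single summand digit cannot be proved, and no choice of the $u_k$ repairs it.

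What is missing is a mechanism that forces the $h$ summands into $h$ distinct classes and makes the marker digits immune to carries.

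\emph{Markers.} The paper places one marker in each class $j\ne i$, at a position $t_j\in W_j$ with $t_j>\ell$, and gives it digit $g-1$ instead of $1$. Every carry is at most $h-1$, and this holds for every $g$. Since $h-1<g-1$ (this is where $g>h$ is used), the digit $g-1$ at $t_j$ cannot be produced by carries. Hence each $A_g(W_j)$ with $j\ne i$ must contribute a summand.

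\emph{Forcing class $i$.} The paper also puts digit $g-1$ at every position $r<k$ with $r\notin W_i$, where $k$ is the lowest nonzero position of $a$. Suppose no summand came from $A_g(W_i)$. Then each other class would have at most two summands. Column sums below $k$ would be at most $2g-2$ against a digit $g-1$ of $n$ (or $0$ against $0$ on $W_i$). So the carry would stay $0$ up to position $k$, contradicting $d_k(n)=d_k(a)>0$.

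\emph{Conclusion.} Every class now contributes exactly one summand, so each column has at most one nonzero digit and there are no carries. The representation is then read off digit by digit. Varying one marker over the infinite set $W_{j_0}$ gives infinitely many such $n$. For the basis property, your sketch can simply be replaced by Lee's Lemma~1(c), as in the paper.
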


\begin{remark}\label{boundary}
The hypothesis $g>h$ cannot in general be replaced by $g\ge h$.
For $g=h=3$, put
\[
W_1=\{2m:m\in\N\}\cup\{1\},\quad
W_2=\{4m+1:m\ge1\},\quad W_3=\{4m+3:m\in\N\}.
\]
Then
\[
\N=W_1\sqcup W_2\sqcup W_3,\qquad
|W_i|=\infty\quad(1\le i\le3),
\]
and
\[
\{0,1\}\subseteq W_1,\qquad W_2-1\subseteq W_1,\qquad
W_3-1\subseteq W_1,
\]
where $W_i-1=\{w-1:w\in W_i\}$.
By the criterion of Bai and Sun \cite{BaiSun2026},
$\bigcup_{i=1}^{3}A_3(W_i)$ is not a minimal asymptotic basis of order three.
\end{remark}

For $z\in\N$, write
\[
z=\sum_{r\ge0}d_r(z)g^r,\qquad 0\le d_r(z)\le g-1.
\]

\begin{corollary}\label{density}
Let $A$ be as in Theorem~\ref{main}. Let $$U=\left\{n\in\N: \text{for each }1\le j\le h,\text{ there exists }r\in W_j\text{ such that }d_r(n)=g-1
\right\}.$$Then for every $n\in U$ we have $\mathcal R_h(n;A)=1$ and
\[
\lim_{x\to\infty}\frac{U(x)}{x}=1.
\]
\end{corollary}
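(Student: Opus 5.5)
The plan is to prove the two assertions of Corollary~\ref{density} separately. First I show that every $n\in U$ has exactly one representation, by a carry argument in base $g$ that uses only $g>h$. Then I show that $U$ has density $1$ by counting digit strings.

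\emph{Uniqueness of the representation.} Let $n\in U$, and write $n=a_1+\cdots+a_h$ with $a_i\in A$. Each $a_i$ lies in some $A_g(W_{j(i)})$, so its nonzero base-$g$ digits occur only at positions in $W_{j(i)}$. Put $s_r=\sum_i d_r(a_i)$, and let $c_r$ be the carry into position $r$, with $c_0=0$. Then
\[
s_r+c_r=d_r(n)+g\,c_{r+1}.
\]

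\begin{enumerate}
\item \emph{Carries are small.} By induction on $r$ we get $c_r\le h-1$. Indeed $s_r\le h(g-1)$, so
\[
c_{r+1}\le \frac{h(g-1)+h-1}{g}<h.
\]
\item \emph{Every class is used.} Fix $j$, and choose $r\in W_j$ with $d_r(n)=g-1$. If no $a_i$ came from $A_g(W_j)$, then $s_r=0$. The carry equation would then give $c_r=g-1+gc_{r+1}\ge g-1\ge h$, contradicting the first step.
\item \emph{One summand per class.} Since there are $h$ summands and $h$ classes, each $A_g(W_j)$ contributes exactly one summand, say $b_j$.
\item \emph{No carries occur.} Because the $W_j$ are disjoint, at each position at most one summand has a nonzero digit, so $s_r\le g-1$. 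Starting from $c_0=0$, induction gives $c_r=0$ for all $r$.
\item \emph{The summands are determined by $n$.} Hence
\[
b_j=\sum_{r\in W_j}d_r(n)g^r .
\]
So the multiset $\{a_1,\dots,a_h\}$ is determined by $n$, which shows $\mathcal R_h(n;A)\le1$.
\item \emph{Existence.} Conversely, each of these $b_j$ has at least one digit equal to $g-1$ at a position of $W_j$, so $b_j\ne0$ and $b_j\in A_g(W_j)$. Their sum is $n$, hence $\mathcal R_h(n;A)=1$.
\end{enumerate}

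\emph{Density.} The complement satisfies $\N\setminus U\subseteq\bigcup_{j}V_j$, where $V_j$ is the set of $n$ with $d_r(n)\ne g-1$ for all $r\in W_j$.

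\begin{enumerate}
\item \emph{Count below a power of $g$.} Among the $g^N$ integers in $[0,g^N)$, the digits are free. Hence
\[
|V_j\cap[0,g^N)|=g^N\Bigl(\frac{g-1}{g}\Bigr)^{|W_j\cap[0,N)|}.
\]
\item \emph{Pass to general $x$.} For $g^{N-1}\le x<g^N$ this yields
\[
\frac{V_j(x)}{x}\le g\Bigl(\frac{g-1}{g}\Bigr)^{|W_j\cap[0,N)|}.
\]
Since $W_j$ is infinite, the right-hand side tends to $0$ as $x\to\infty$.
\item \emph{Conclude.} Summing over the $h$ classes gives $U(x)/x\to1$.
\end{enumerate}

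The only delicate point is the second step of the uniqueness argument. The hypothesis $g>h$ enters there through the inequality $g-1\ge h>c_r$, and this is what forces every class to appear. The remaining steps are routine bookkeeping.
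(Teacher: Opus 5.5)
Your proposal is correct and follows essentially the same route as the paper. It uses the carry bound $c_r\le h-1$ together with a digit $g-1$ at some position of each $W_j$ to force exactly one summand from each $A_g(W_j)$, then observes that carries vanish so each summand is $\sum_{r\in W_j}d_r(n)g^r$, and gets the density from the count $(g-1)^{|W_j\cap[0,N)|}g^{N-|W_j\cap[0,N)|}$ with a union bound over $j$ (the only difference is that you prove uniqueness before existence).
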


\begin{corollary}\label{quantitative}
Let $A$ be as in Theorem~\ref{main}. For every $1\le i\le h$ and every
$a\in A_g(W_i)$, there exist constants $c_a>0$ and $m_a\in\N$ such that
\begin{equation}\label{essentialbound}
E_a(g^m)\ge c_a g^{m-W_i(m)}
\end{equation}
for all $m\ge m_a$.
\end{corollary}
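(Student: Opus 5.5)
The plan is to produce, for a fixed $a\in A_g(W_i)$ with digit support $F\subseteq W_i$, an explicit family of integers $n<g^m$. Each will lie in $E_a$ with $\mathcal R_h(n;A)=1$, and the family will have size $\gg g^{m-W_i(m)}$. The integers will be built digitwise, so that their counting reduces to counting free digit choices on the positions of $W_j\cap[0,m)$ for $j\neq i$. For each $j\ne i$ fix once and for all some $r_j\in W_j$ with $r_j>\max F$; this is possible since $|W_j|=\infty$. Let $m_a$ exceed $\max F$ and all $r_j$. For $m\ge m_a$ and $j\ne i$, let $a_j$ range over the integers whose $g$-adic digits vanish off $W_j\cap[0,m)$, equal $g-1$ at $r_j$, and are arbitrary in $\{0,\dots,g-1\}$ elsewhere on $W_j\cap[0,m)$. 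Then $a_j\in A_g(W_j)$, since the digit at $r_j$ is nonzero and we may take $F_j$ to be the set of positions with nonzero digits. Put $n=a+\sum_{j\ne i}a_j$.

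Because the supports lie in the pairwise disjoint sets $W_1,\dots,W_h$, there are no carries. Hence $n<g^m$, and $d_r(n)$ equals the digit of $a$ for $r\in W_i$ and the digit of $a_j$ for $r\in W_j$. So distinct choices of $(a_j)_{j\ne i}$ give distinct $n$. The number of such $n$ is
\[
\prod_{j\ne i} g^{|W_j\cap[0,m)|-1}= g^{\,m-|W_i\cap[0,m)|-(h-1)} .
\]
Since $|W_i\cap[0,m)|$ differs from the counting function $W_i(m)$ (positive elements $\le m$) by at most $1$, this is at least $g^{-h}\,g^{m-W_i(m)}$. So $c_a=g^{-h}$ will do, once each such $n$ is shown to lie in $E_a$.

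The main step, and the expected obstacle, is showing that every such $n$ has the single representation $n=a+\sum_{j\ne i}a_j$ (with one part taken to be $a$), so that $n\notin h(A\setminus\{a\})$. For this I would reuse the carry analysis behind Theorem~\ref{main} and Corollary~\ref{density}. Let $n=b_1+\cdots+b_h$ with $b_k\in A$. At each position the $h$ digits plus the incoming carry sum to at most $h(g-1)+(h-1)<hg$, so every carry is at most $h-1\le g-2$ because $g>h$.

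I would run an induction from the lowest position upward to show that no carry ever occurs. A first carry at position $r$ would force $d_r(n)$ to be a residue inconsistent with the digits available there. More usefully, consider the digit $g-1$ at each $r_j$. Because carries are $\le g-2$ and $g>h$, this digit forces some summand $b_k$ with a nonzero digit in $W_j$, hence $b_k\in A_g(W_j)$. With $h-1$ classes $j\ne i$ each requiring its own summand, at most one summand remains. Since $n$ has nonzero digits in $W_i$ (those of $a$), that remaining summand must lie in $A_g(W_i)$.

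Once each summand is pinned to its class, there are no carries and the digits separate by class. This forces the summand in $A_g(W_i)$ to equal $a$ and the others to equal the $a_j$, giving $\mathcal R_h(n;A)=1$ and $n\in E_a$. The delicate point, which I would check carefully, is excluding two summands from the same class whose digits combine with a carry to imitate a $g-1$ digit or $a$'s digits. I would handle it by comparing digit sums: each carry costs $g-1$ in the total digit sum. This should be played off against the requirement that every class $W_j$ with $j\ne i$ be hit by some summand.
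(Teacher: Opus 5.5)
There is a genuine gap, at the step ``at most one summand remains \dots\ that remaining summand must lie in $A_g(W_i)$''. Your count, and your argument that each class $j\ne i$ receives a summand (via the digit $g-1$ at $r_j$ and carries $\le h-1<g-1$), are fine. But the claim that the remaining summand lies in $A_g(W_i)$ is false for your family. No digit-sum comparison can rescue it, because the family really contains integers outside $E_a$.

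If no summand lies in $A_g(W_i)$, then one class $j\ne i$ holds two summands. Their digits on $W_j\cap[0,k)$, where $k$ is the lowest nonzero position of $a$, can produce a carry that lands exactly on $a$'s lowest digit. Concretely, let $g=3$, $h=2$, $W_1$ the odd and $W_2$ the even integers, $a=3\in A_3(W_1)$ and $r_2=2$. Suppose $d_0(a_2)\in\{0,1\}$, which your family allows for every $m$. Then $n=3+a_2=(a_2+1)+2$ with $a_2+1,\,2\in A_3(W_2)$, so $n\in 2(A\setminus\{3\})$; for example $21=3+18=19+2$. Thus two thirds of your integers are not in $E_a$, and $\mathcal R_h(n;A)=1$ fails for them.

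The missing idea is the paper's guard block. Force $d_r(n)=g-1$ at every $r<k$ with $r\notin W_i$, that is, give $a_j$ the digit $g-1$ at all positions of $W_j\cap[0,k)$. Now suppose no summand lies in $A_g(W_i)$, so some class has two summands.

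\begin{itemize}
\item At each $r<k$ in $W_j$, $j\ne i$, the digit sum is at most $2g-2$. With incoming carry $0$, the relation $g-1+gc_{r+1}\le 2g-2$ forces $c_{r+1}=0$.
\item At positions of $W_i$ below $k$, all digits vanish, so again no carry is produced.
\item Hence $c_k=0$, and $d_k(n)=d_k(a)>0$ cannot arise. This contradiction shows every class gets exactly one summand, and your final no-carry separation argument then goes through.
\end{itemize}

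This restriction costs only a factor $g^{-k}$ in your count, so you still get $E_a(g^m)\ge g^{-k-h}g^{m-W_i(m)}$. The paper instead freezes all digits below some $M>\max_{j\ne i}t_j$ and frees only positions in $[M,m)$, losing the constant $g^{-M+W_i(M-1)}$.
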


\begin{corollary}\label{balanced}
Let $A$ be as in Theorem~\ref{main}, and suppose that $$W_i(m)=\frac{m}{h}+O(1)
\qquad(m\to\infty,\ 1\le i\le h).$$
Then there exist constants $c_1,c_2>0$ such that $$c_1x^{1/h}\le A(x)\le c_2x^{1/h}$$
for all sufficiently large $x$. Moreover, for every $a\in A$, there exists
a constant $c_a>0$ such that $$E_a(x)\ge c_a x^{1-1/h}$$
for all sufficiently large $x$. In particular, $A$ is thin and strongly minimal.
\end{corollary}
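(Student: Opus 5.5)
The plan is to derive everything from an explicit count of $A_g(W_i)\cap[0,g^m)$ together with Corollary~\ref{quantitative}, and then pass from the scales $x=g^m$ to arbitrary $x$ by monotonicity. Write $W_i(m)=\frac mh+O(1)$ and fix a constant $C\ge1$ with $|W_i(m)-m/h|\le C$ for all $i$ and all $m\ge1$. Note that the counting function $W_i(m)$ ignores the possible element $0\in W_i$ and counts up to $m$ rather than $m-1$. Any such discrepancy changes the counts below by at most an additive $2$, so it only affects the constants.

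\emph{Counting $A$.} An integer $z$ with $1\le z<g^m$ lies in $A_g(W_i)$ exactly when every nonzero base-$g$ digit of $z$ sits at a position in $W_i\cap[0,m)$. Choosing a digit in $\{0,\dots,g-1\}$ at each such position, not all zero, gives
\[
\bigl|A_g(W_i)\cap[1,g^m)\bigr| = g^{|W_i\cap[0,m)|}-1 .
\]
Since $|W_i\cap[0,m)|\le W_i(m)+1\le m/h+C+1$, we get $A(g^m)\le h\,g^{m/h+C+1}$. In the other direction, $|W_1\cap[0,m)|\ge W_1(m)-1\ge m/h-C-1$, so $A(g^m)\ge g^{m/h-C-1}-1\ge \tfrac12 g^{m/h-C-1}$ for large $m$.

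\emph{From $g^m$ to general $x$.} For large $x$, choose $m$ with $g^{m-1}\le x<g^m$. Then
\[
A(x)\le A(g^m)\le h g^{C+1}g^{m/h}\le h g^{C+2}x^{1/h},
\]
and
\[
A(x)\ge A(g^{m-1})\ge \tfrac12 g^{(m-1)/h-C-1}\ge \tfrac12 g^{-C-2}x^{1/h}.
\]
This gives $c_1,c_2$ and, with the upper bound, thinness.

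\emph{Lower bound for $E_a$.} Let $a\in A$, say $a\in A_g(W_i)$. By Corollary~\ref{quantitative}, for $m\ge m_a$,
\[
E_a(g^m)\ge c_a g^{m-W_i(m)}\ge c_a g^{-C}g^{m(1-1/h)}.
\]
For large $x$, take $m$ with $g^m\le x<g^{m+1}$. Monotonicity then gives
\[
E_a(x)\ge E_a(g^m)\ge c_a g^{-C}g^{-(1-1/h)}g^{(m+1)(1-1/h)}\ge c_a g^{-C-1}x^{1-1/h}.
\]
Finally, $A(x)^{h-1}\le c_2^{h-1}x^{(h-1)/h}$, so $E_a(x)\ge (c_a g^{-C-1}c_2^{1-h})A(x)^{h-1}$. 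This is strong minimality; minimality itself is already part of Theorem~\ref{main}.

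\emph{Main obstacle.} The only step needing care is the bookkeeping between the three counting conventions: $A(x)$ counts positive elements, $W_i(m)$ may or may not include $0$ and $m$, and the digit positions used are $[0,m)$. Each of these shifts the relevant exponent by a bounded amount, so all of them are absorbed into $C$. There is no genuine analytic difficulty, since the essential input is Corollary~\ref{quantitative}.
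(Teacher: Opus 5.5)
Your proof is correct. The lower bound for $E_a(x)$ is the same argument as the paper's: Corollary~\ref{quantitative}, then $W_i(m)\le m/h+C$, then monotonicity between consecutive powers of $g$.

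You differ from the paper in the two-sided estimate for $A(x)$. The paper cites Lemma~\ref{basis}(b) (Lee's counting result with $\theta=1/h$) for each $A_g(W_i)$. It then adds the $h$ estimates, using the disjointness in Lemma~\ref{basis}(a). You instead prove what is needed directly, from the exact count $|A_g(W_i)\cap[1,g^m)|=g^{|W_i\cap[0,m)|}-1$, and then interpolate between powers of $g$.

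Your route is self-contained and needs slightly less. The upper bound is a union bound, so disjointness of the $A_g(W_i)$ is never used. The lower bound uses only $A_g(W_1)\subseteq A$, so it needs the density hypothesis for just one part. The paper's route is shorter, but it relies on an external lemma.

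Two small points:
\begin{itemize}
\item $A(g^m)$ also counts $g^m$ itself, which lies in $A$. This extra $1$ is absorbed by the $-1$ terms, so your bound $A(g^m)\le hg^{m/h+C+1}$ stands.
\item Strong minimality is defined with a strict inequality $E_a(x)>c_aA(x)^{h-1}$. You should therefore shrink your final constant, e.g.\ halve it, as the paper does.
\end{itemize}
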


\section{Lemmas}

\begin{lemma}[{\cite[Lemma~1]{Lee}}]\label{basis}
Let $g\ge2$ be an integer.
\begin{itemize}
\item[$(a)$] If $W_1,W_2\subseteq\N$ are nonempty and disjoint, then
$A_g(W_1)\cap A_g(W_2)=\varnothing$.
\item[$(b)$] If $W\subseteq\N$ satisfies
$W(x)=\theta x+O(1)$ for some $\theta\in(0,1]$, then there exist constants
$c_1,c_2>0$ such that
\[
c_1x^\theta\le A_g(W)(x)\le c_2x^\theta
\]
for all sufficiently large $x$.
\item[$(c)$] If $h\ge2$ and $\N=W_1\sqcup\cdots\sqcup W_h$ is a
partition into nonempty sets, then $\bigcup_{i=1}^h A_g(W_i)$ is an
asymptotic basis of order $h$.
\end{itemize}
\end{lemma}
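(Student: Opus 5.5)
The plan is to treat the three parts separately. Everything rests on the uniqueness of the base-$g$ expansion $z=\sum_r d_r(z)g^r$. For $z\in\N$ write $\operatorname{supp}(z)=\{r: d_r(z)\neq 0\}$. By definition,
\[
A_g(W)=\{z\ge1:\ \operatorname{supp}(z)\subseteq W\}.
\]
Indeed, an element $\sum_{f\in F}a_fg^f$ with $1\le a_f\le g-1$ is already a base-$g$ expansion, with support exactly $F$. Conversely, any $z\ge1$ whose support lies in $W$ has this form with $F=\operatorname{supp}(z)$.

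For $(a)$: an element of $A_g(W_1)\cap A_g(W_2)$ would be some $z\ge1$ with $\emptyset\ne\operatorname{supp}(z)\subseteq W_1\cap W_2=\varnothing$, which is impossible.

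For $(b)$: fix $x$ large and choose $m$ with $g^m\le x<g^{m+1}$. By the description above, the elements of $A_g(W)$ below $g^{k+1}$ are exactly the nonzero integers whose digits vanish outside $W\cap[0,k]$. Each position in $W\cap[0,k]$ carries any of the $g$ digits $0,\dots,g-1$. Hence the number of such elements is $g^{|W\cap[0,k]|}-1$, and $|W\cap[0,k]|=W(k)+O(1)=\theta k+O(1)$. This gives the two bounds:
\[
A_g(W)(x)\le g^{\theta m+O(1)}\le c_2x^\theta,
\qquad
A_g(W)(x)\ge g^{\theta(m-1)+O(1)}-1\ge c_1x^\theta .
\]
The second bound uses $x<g^{m+1}$ and $\theta>0$ to absorb the constants.

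For $(c)$, I would show that every $n\ge h$ lies in $hA$, where $A=\bigcup_{i=1}^h A_g(W_i)$. The argument has three steps.

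First, decompose $n$ by classes: put $n_i=\sum_{r\in W_i}d_r(n)g^r$. Then $n=\sum_i n_i$, and each nonzero $n_i$ lies in $A_g(W_i)$. So $n$ is a sum of $m$ elements of $A$ for some $1\le m\le h$.

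Second, prove a splitting lemma: every $x\in A$ with $x\ge2$ is a sum of two elements of $A$. There are three cases.
\begin{itemize}
\item If $x$ has two nonzero digits, separate them by position: take one nonzero digit as one summand and the remaining digits as the other. Both summands lie in the same $A_g(W_i)$ as $x$.
\item If $x=dg^r$ with $d\ge2$, write $x=g^r+(d-1)g^r$.
\item If $x=g^r$ with $r\ge1$, write $x=(g-1)g^{r-1}+g^{r-1}$. Both summands lie in $A_g(W_j)$, where $j$ is the index with $r-1\in W_j$. The class may change, but that is harmless because $A$ is the union over all classes.
\end{itemize}
The only element of $A$ that cannot be split is $1$.

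Third, iterate. As long as the number of summands is less than $h\le n$, their sum $n$ exceeds their number, so some summand is at least $2$. Splitting it raises the count by one. After $h-m$ steps we reach $n\in hA$.

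The main obstacle is exactly this splitting step in $(c)$: the naive digit decomposition may produce fewer than $h$ nonzero pieces, and $0\notin A$. The observation that resolves it is that a lone power $g^r$ can be split using the neighbouring position $r-1$, whatever class that position belongs to.
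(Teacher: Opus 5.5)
Your proof is correct and complete. There is no route to compare against: the paper does not prove this lemma but quotes it from Lee~\cite[Lemma~1]{Lee}, so your argument is a self-contained replacement for that citation.

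The characterization $A_g(W)=\{z\ge1:\mathrm{supp}(z)\subseteq W\}$, which drives all three parts, is the same viewpoint the paper uses later. Your decomposition $n_i=\sum_{r\in W_i}d_r(n)g^r$ in part $(c)$ is exactly the $b_j$ in the proof of Corollary~\ref{density}. Your count $g^{|W\cap[0,k]|}-1$ in part $(b)$ is the same kind of digit count used there for $B_j(m)$.

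Part $(c)$ actually proves a little more than is stated. Every element of $A$ is at least $1$, so your argument gives $hA=\{n\in\N:n\ge h\}$ exactly. It also never uses that the parts $W_i$ are nonempty.

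The only step that is not bookkeeping is the splitting $g^r=(g-1)g^{r-1}+g^{r-1}$, which moves the summands into the class of $r-1$. You are right that this is harmless because $A$ is the union of all the classes. The iteration terminates because $n\ge h>m$ forces some summand to be at least $2$.
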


\begin{lemma}\label{carry}
Let $g,h\ge2$, and let $x_1,\ldots,x_h\in\N$ with
$n=x_1+\cdots+x_h$. Define
\[
c_0=0,\qquad
c_{r+1}=\left\lfloor
\frac{\sum_{j=1}^{h}d_r(x_j)+c_r}{g}
\right\rfloor\quad(r\ge0).
\]
Then $0\le c_r\le h-1$ and
\begin{equation}\label{carryeq}
\sum_{j=1}^{h}d_r(x_j)+c_r
=d_r(n)+gc_{r+1}\qquad(r\ge0).
\end{equation}
\end{lemma}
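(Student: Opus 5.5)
The statement to prove is Lemma~\ref{carry}. It is a routine digit-by-digit addition with carries, and I would argue by induction on $r$, proving both assertions at once.

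The identity \eqref{carryeq} follows from the definition alone. Put $S_r=\sum_{j=1}^h d_r(x_j)+c_r$. By definition $c_{r+1}=\lfloor S_r/g\rfloor$, so division with remainder gives
\[
S_r=e_r+gc_{r+1},\qquad 0\le e_r\le g-1 .
\]
It remains to show that $e_r=d_r(n)$. For the bound $0\le c_r\le h-1$, induct on $r$. The case $c_0=0$ is trivial. If $0\le c_r\le h-1$, then
\[
0\le S_r\le h(g-1)+h-1=hg-1,
\]
and hence $0\le c_{r+1}=\lfloor S_r/g\rfloor\le h-1$.

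To identify $e_r$ with $d_r(n)$, I would prove by induction on $R$ that
\[
\sum_{j=1}^h\sum_{r<R}d_r(x_j)g^r=\sum_{r<R}e_rg^r+c_Rg^R .
\]
For $R=0$ both sides are $0$, since $c_0=0$. For the inductive step, add $\sum_j d_R(x_j)g^R$ to both sides. On the right this produces $(S_R)g^R=(e_R+gc_{R+1})g^R$, which is exactly the formula for $R+1$.

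Now choose $R$ larger than the number of base-$g$ digits of every $x_j$. Then $d_r(x_j)=0$ for all $r\ge R$, so the left side equals $n$. It also follows that $c_r=0$ for $r>R$, because once every digit vanishes we get $S_r=c_r<g$, so the carry after position $R$ is zero. Hence $n=\sum_{r}e_rg^r$ with every $e_r\in[0,g-1]$. By uniqueness of the base-$g$ expansion, $e_r=d_r(n)$ for all $r$, which gives \eqref{carryeq}.

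Nothing here is a real obstacle. The one point to handle carefully is the finiteness and termination of the carries, which is what justifies invoking uniqueness of the $g$-adic expansion; it follows from the bound $c_r\le h-1<g$ proved above.
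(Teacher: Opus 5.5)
Your overall scheme matches the paper's: the bound on $c_r$ by induction, division with remainder, the telescoped identity $\sum_j\sum_{r<R}d_r(x_j)g^r=\sum_{r<R}e_rg^r+c_Rg^R$, and uniqueness of base-$g$ expansions. The gap is in the termination step. You assert that once all digits of the $x_j$ vanish one has $S_r=c_r<g$, and you justify this by $c_r\le h-1<g$. But the lemma is stated for arbitrary $g,h\ge2$, and $h-1<g$ means $h\le g$, which is not assumed. For example, take $g=2$, $h=8$ and $x_1=\cdots=x_8=1$. Each $x_j$ has one digit, so $R=2$ is an admissible choice for you. The carries are $c_1=4$, $c_2=2$, $c_3=1$, $c_4=0$, so $S_1=4$ and $S_2=2$ are not $<g$, and $c_3\neq0$ even though $3>R$. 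As written, your argument proves the lemma only when $h\le g$. That covers the paper's application, where $g>h$, but not the statement itself.

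The repair is easy. For $r$ beyond the top digit of every $x_j$ you have $c_{r+1}=\lfloor c_r/g\rfloor$, which is strictly less than $c_r$ whenever $c_r\ge1$. So the carries reach $0$ after finitely many further positions and then stay $0$. Choosing $R$ past that point gives $c_R=0$ and $e_r=0$ for $r\ge R$, and your uniqueness argument goes through.

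Alternatively, do what the paper does. Reduce the telescoped identity modulo $g^{R+1}$, so that the leftover term $c_{R+1}g^{R+1}$ disappears. Then compare the two residues $\sum_{r\le R}e_rg^r$ and $\sum_{r\le R}d_r(n)g^r$, both of which lie in $[0,g^{R+1}-1]$. This works for every $R$ and needs neither a termination argument nor any relation between $g$ and $h$.
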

\begin{proof}
We first prove $0\le c_r\le h-1$ by induction on $r$.
The assertion holds for $r=0$ since $c_0=0$.
Suppose that $0\le c_r\le h-1$ for some $r\ge0$.
Since $0\le d_r(x_j)\le g-1$ for $1\le j\le h$,
we have
\[
0\le\sum_{j=1}^h d_r(x_j)+c_r
\le h(g-1)+(h-1)=hg-1.
\]
Hence
\[
0\le c_{r+1}
\le\left\lfloor\frac{hg-1}{g}\right\rfloor
=h-1.
\]
Thus the assertion also holds for $r+1$.
By mathematical induction, $0\le c_r\le h-1$ for every $r\ge0$.

For each $r\ge0$, let
\begin{equation}\label{eq2.1}a_r=\sum_{j=1}^{h}d_r(x_j)+c_r-gc_{r+1}.\end{equation}
By the definition of $c_{r+1}$, we have $0\le a_r\le g-1$.

Fix $R\ge0$. We have
\begin{equation}\label{eq2.2}n\equiv\sum_{r=0}^{R}d_r(n)g^r\pmod{g^{R+1}},\qquad
0\le\sum_{r=0}^{R}d_r(n)g^r\le g^{R+1}-1
\end{equation}

On the other hand, by (\ref{eq2.1}) we have
\[
\begin{aligned}
\sum_{r=0}^{R}a_rg^r
&=\sum_{j=1}^{h}\sum_{r=0}^{R}d_r(x_j)g^r
  +\sum_{r=0}^{R}c_rg^r
  -\sum_{r=0}^{R}c_{r+1}g^{r+1}\\
&=\sum_{j=1}^{h}\sum_{r=0}^{R}d_r(x_j)g^r
  -c_{R+1}g^{R+1}.
\end{aligned}
\]
For $j=1,\ldots, h$, we have
$$x_j\equiv\sum_{r=0}^{R}d_r(x_j)g^r \pmod{g^{R+1}},$$
thus \begin{equation}\label{eq2.3}n=\sum_{j=1}^{h}x_j\equiv\sum_{j=1}^{h}\sum_{r=0}^{R}d_r(x_j)g^r\equiv\sum_{r=0}^{R}a_rg^r\pmod{g^{R+1}}.\end{equation}
Noting that $0\le a_r\le g-1$ for $0\le r\le R$, we have
$$0\le\sum_{r=0}^{R}a_rg^r \le(g-1)\sum_{r=0}^{R}g^r=g^{R+1}-1.$$
Hence, by (\ref{eq2.2}) and (\ref{eq2.3}) we have
\[
\sum_{r=0}^{R}a_rg^r=\sum_{r=0}^{R}d_r(n)g^r.
\]
By the uniqueness of $g$-adic representation, we have
$a_r=d_r(n)(0\le r\le R)$.
Since $R$ was arbitrary, we have $a_r=d_r(n)$ for every $r\ge0$.
Substituting this into \eqref{eq2.1} gives \eqref{carryeq}.

\end{proof}
\section{Proof of Theorem~\ref{main}}
By Lemma~\ref{basis}(c), $A$ is an asymptotic basis of order $h$.
Fix $a\in A$. Since $A=\bigcup_{j=1}^{h}A_g(W_j)$, Lemma~\ref{basis}(a) gives a unique
$i\in\{1,\ldots,h\}$ such that $a\in A_g(W_i)$. Put
\[
k=\min\{r\ge0:d_r(a)>0\},\qquad
\ell=\max\{r\ge0:d_r(a)>0\}.
\]
Then $k,\ell\in W_i$, $d_k(a)>0$, and $d_r(a)=0$ for $r<k$ or $r>\ell$.
For each $j\ne i$, choose $t_j\in W_j$ with $t_j>\ell$, and put
\begin{equation}\label{bj}
b_j=(g-1)\left(g^{t_j}+
\sum_{\substack{0\le r<k,r\in W_j}}g^r\right),
\qquad n=a+\sum_{j\ne i}b_j.
\end{equation}
Then $n\in hA$. We shall prove that, up to permutation of the summands, \eqref{bj} is the unique representation of $n$ as a sum of $h$ elements of $A$.

\medskip
\textit{Step 1.} We shall prove
\begin{align}
d_r(n)&=d_r(a) &&(r\in W_i),\label{ndigiti}\\
d_r(n)&=g-1 &&(0\le r<k,\ r\notin W_i),\label{ndigitlow}\\
d_{t_j}(n)&=g-1 &&(j\ne i).\label{ndigitmarker}
\end{align}

Since $a\in A_g(W_i)$, $b_j\in A_g(W_j)$ $(j\ne i)$, and
$W_1,\ldots,W_h$ are pairwise disjoint, for any $r\ge0$ we have
\[
0\le d_r(a)+\sum_{j\ne i}d_r(b_j)\le g-1.
\]
Hence, by \eqref{bj}, for every
$r\ge0$, we have
\begin{equation}\label{ndigits}
d_r(n)=d_r(a)+\sum_{j\ne i}d_r(b_j).
\end{equation}

If $r\in W_i$, then $d_r(b_j)=0$ for every $j\ne i$, and hence
$d_r(n)=d_r(a)$. This proves \eqref{ndigiti}.

If $0\le r<k$ and $r\notin W_i$, then there exists the unique
index $j(\ne i)$ such that $r\in W_j$. By \eqref{bj}, we have
\[
d_r(b_j)=g-1,
\]
while $d_r(a)=0$ and $d_r(b_{j'})=0$ for every
$j'\ne i,j$. Hence $d_r(n)=g-1$, which proves \eqref{ndigitlow}.

Finally, fix $j\ne i$. Since $t_j>\ell\ge k$, we have
$d_{t_j}(b_j)=g-1$.
Since $t_j\in W_j$, we also have
\[
d_{t_j}(a)=0,\qquad
d_{t_j}(b_{j'})=0\quad(j'\ne i,j).
\]
Therefore $d_{t_j}(n)=g-1$, which proves \eqref{ndigitmarker}.

\medskip
\textit{Step 2.}
Consider an arbitrary representation
\begin{equation}\label{arbitrary}
n=x_1+\cdots+x_h,\qquad x_1,\ldots,x_h\in A.
\end{equation}
For $1\le j\le h$, let
\[
q_j=\left|\{p\in\{1,\ldots,h\}:x_p\in A_g(W_j)\}\right|.
\]
Since each $x_p\in A$, Lemma~\ref{basis}(a) implies that each summand
belongs to exactly one $A_g(W_j)$. Hence
\begin{equation}\label{qsum}
q_j\ge0\quad(1\le j\le h),\qquad
\sum_{j=1}^{h}q_j=h.
\end{equation}
We shall prove
\begin{equation}\label{eq3.8}q_j=1\qquad(1\le j\le h).\end{equation}

Let $c_r$ be defined as in Lemma~\ref{carry} for \eqref{arbitrary}.
Then for any $r\ge0$ we have
$0\le c_r\le h-1$ and \eqref{carryeq} holds.

We first prove $q_j\ge1(j\ne i)$ for the arbitrary representation in \eqref{arbitrary}. Fix $j\ne i$ and
suppose that $q_j=0$. By the definition of $q_j$, we have $x_p\notin A_g(W_j)$ for $1\le p\le h$.
Since each $x_p$ belongs to exactly one $A_g(W_s)$ and $t_j\in W_j$,
the pairwise disjointness of $W_1,\ldots,W_h$ gives
\[
d_{t_j}(x_p)=0\qquad(1\le p\le h).
\]
Taking $r=t_j$ in \eqref{carryeq} and using \eqref{ndigitmarker}, we have
$c_{t_j}=g-1+gc_{t_j+1}$.
Hence
\[
h-1\ge c_{t_j}\ge g-1>h-1,
\]
a contradiction. Therefore $q_j\ge1$ for all $j\ne i$.

We next prove that $q_i\ge1$.
Suppose that $q_i=0$. By (\ref{qsum}), $\sum_{j\ne i}q_j=h$.
Since $q_j\ge1$ for every $j\ne i$, it follows that
\begin{equation}\label{qbound}
1\le q_j\le2\qquad(j\ne i).
\end{equation}

 We now prove
\[
c_r=0\qquad(0\le r\le k).
\]
Let $0\le r<k$ and suppose that $c_r=0$.

If $r\in W_i$, then $q_i=0$ implies that for any $1\le p\le h$ we have
$d_r(x_p)=0$.
Moreover, since $r<k$, we have $d_r(a)=0$. By \eqref{ndigiti} we have
\[
d_r(n)=d_r(a)=0.
\]
Hence, by \eqref{carryeq}, we have $gc_{r+1}=0$,
and hence $c_{r+1}=0$.

If $r\in W_j$ for some $j\ne i$, then
\[
d_r(x_p)=0
\qquad\text{for every }p\text{ with }x_p\notin A_g(W_j).
\]
Hence, by \eqref{qbound}, we have
\[
\sum_{p=1}^{h}d_r(x_p)
=\sum_{\substack{1\le p\le h\\x_p\in A_g(W_j)}}d_r(x_p)
\le q_j(g-1)\le2g-2.
\]
Since $d_r(n)=g-1$ by \eqref{ndigitlow}, $c_r=0$, and
\eqref{carryeq}, we have
\[
g-1+gc_{r+1}
=\sum_{p=1}^{h}d_r(x_p)
\le2g-2.
\]
Thus $0\le gc_{r+1}\le g-1$,
and hence $c_{r+1}=0$.

By the above discussion, we know that whenever $0\le r<k$ and $c_r=0$, we have $c_{r+1}=0$.
Since $c_0=0$, it follows successively that
\[
c_r=0\qquad(0\le r\le k).
\]

Since $k\in W_i$ and $q_i=0$, for every $1\le p\le h$ we have
$d_k(x_p)=0$.
Moreover, by \eqref{ndigiti},
\[
d_k(n)=d_k(a)>0.
\]
Using $c_k=0$ in \eqref{carryeq}, we obtain
\[
0
=\sum_{p=1}^{h}d_k(x_p)+c_k
=d_k(n)+gc_{k+1}
=d_k(a)+gc_{k+1}>0,
\]
a contradiction. Therefore
$q_i\ge1$.

In all, for any $1\le j\le h$, we have
$q_j\ge1$. Together with \eqref{qsum}, we obtain
\eqref{eq3.8}.

\medskip

\textit{Step 3.} We shall prove that, after relabelling
$x_1,\ldots,x_h$ in \eqref{arbitrary}, we have
\[
x_i=a,\qquad x_j=b_j\quad(j\ne i).
\]

By \eqref{eq3.8}, after relabelling $x_1,\ldots,x_h$, we may assume that
\begin{equation}\label{eq3.10}x_j\in A_g(W_j)\qquad(1\le j\le h).\end{equation}
For any $r\ge0$, let $j$ be the unique index such that $r\in W_j$.
Then
\[
d_r(x_p)=0\qquad(p\ne j),
\]
and hence
\[
\sum_{p=1}^{h}d_r(x_p)=d_r(x_j)\le g-1.
\]
Since $c_0=0$, an induction on $r$ using the recurrence in Lemma~\ref{carry}
gives $c_r=0$ for every $r\ge0$. Therefore, by \eqref{carryeq},
\begin{equation}\label{uniquedigits}
d_r(n)=d_r(x_j)
\qquad(r\in W_j,\ 1\le j\le h).
\end{equation}

By \eqref{ndigits}, for $r\in W_i$ we have
\[
d_r(n)=d_r(a),
\]
whereas for $r\in W_j$ with $j\ne i$ we have
\[
d_r(n)=d_r(b_j).
\]
It follows from \eqref{uniquedigits} that
\[
d_r(x_i)=d_r(a)\qquad(r\in W_i)
\]
and
\[
d_r(x_j)=d_r(b_j)
\qquad(r\in W_j,\ j\ne i).
\]
By \eqref{eq3.10}, we have $x_i, a\in A_g(W_i)$, thus
\[
d_r(x_i)=d_r(a)=0\qquad(r\notin W_i)
\]
and, for $j\ne i$, noting that $x_j, b_j\in A_g(W_j)$, we have
\[
d_r(x_j)=d_r(b_j)=0\qquad(r\notin W_j).
\]
By the uniqueness of $g$-adic representation, we have
\[
x_i=a,\qquad x_j=b_j\quad(j\ne i).
\]
Hence, $\mathcal R_h(n;A)=1$.
Moreover, the unique representation of $n$ contains $a$. Therefore
\[
n\notin h(A\setminus\{a\}).
\]
Since $n\in hA$ by \eqref{bj}, we obtain
\[
n\in hA\setminus h(A\setminus\{a\})=E_a.
\]

Since $h\ge2$, fix $j_0\ne i$. For each
$j\ne i,j_0$, choose and fix $t_j\in W_j$ with $t_j>\ell$.
For every $t\in W_{j_0}$ with $t>\ell$, put
\[
b_{j_0}(t)
=(g-1)\left(
g^t+\sum_{\substack{0\le r<k,r\in W_{j_0}}}g^r
\right), \quad n(t)
=a+b_{j_0}(t)
+\sum_{\substack{j\not\in\{i,\; j_0\}}}b_j.
\]
By Steps~1--3, for every $t\in W_{j_0}$ with $t>\ell$ we have
\[
n(t)\in E_a,\qquad
\mathcal R_h(n(t);A)=1.
\]

Let $t,t'\in W_{j_0}$ satisfy
$\ell<t<t'$.
Then
$$n(t')-n(t)=b_{j_0}(t')-b_{j_0}(t)=(g-1)(g^{t'}-g^t)>0.$$
Hence the integers $n(t)$ are pairwise distinct.

Since $W_{j_0}$ is infinite and
$\left|\{t\in W_{j_0}:t\le\ell\}\right|<\infty$,
we have
\[
\left|\{t\in W_{j_0}:t>\ell\}\right|=\infty.
\]
Consequently,
\[
\left|\{n\in E_a:\mathcal R_h(n;A)=1\}\right|=\infty.
\]
Since $a\in A$ was arbitrary, $E_a$ is infinite for every $a\in A$.
By \eqref{essential}, $A$ is minimal.

\section{Proofs of the corollaries}

\begin{proof}[Proof of Corollary~\ref{density}]
Fix $n\in U$. By the definition of $U$, for each $1\le j\le h$
there exists $t_j\in W_j$ such that
\begin{equation}\label{densitycondition}
d_{t_j}(n)=g-1.
\end{equation}
Put
\[
b_j=\sum_{r\in W_j}d_r(n)g^r\qquad(1\le j\le h).
\]
By definition, $d_r(b_j)=0$ for $r\notin W_j$.
Since $b_j\ge(g-1)g^{t_j}>0$, we have $b_j\in A_g(W_j)$.
Since $\N=W_1\sqcup\cdots\sqcup W_h$, we obtain
\[
n=b_1+\cdots+b_h.
\]

Consider an arbitrary representation
\[
n=x_1+\cdots+x_h,\qquad x_1,\ldots,x_h\in A,
\]
and put
\[
q_j=\left|\{p\in\{1,\ldots,h\}:x_p\in A_g(W_j)\}\right|
\qquad(1\le j\le h).
\]
Since $A=\bigcup_{j=1}^h A_g(W_j)$, Lemma~\ref{basis}(a) gives
\[
q_j\ge0,\qquad \sum_{j=1}^h q_j=h.
\]
Let $c_r$ be the carries associated with this representation,
as in Lemma~\ref{carry}. If $q_j=0$ for some $1\le j\le h$,
then each $x_p$ belongs to $A_g(W_i)$ for some $i\ne j$.
Since $t_j\in W_j$ and $W_i\cap W_j=\varnothing$, we have
\[
d_{t_j}(x_p)=0\qquad(1\le p\le h).
\]
Hence \eqref{carryeq} and \eqref{densitycondition} give
\[
h-1\ge c_{t_j}=g-1+gc_{t_j+1}\ge g-1>h-1,
\]
a contradiction. Thus $q_j\ge1$ for every $1\leq j\leq h$, and therefore
\[
q_j=1\qquad(1\le j\le h).
\]

After relabelling, we have $x_j\in A_g(W_j)$ for $1\le j\le h$.
For each $r\ge0$, let $j$ be the unique index such that $r\in W_j$.
Since $x_p\in A_g(W_p)$ for $1\le p\le h$, we have
\[
d_r(x_p)=0\qquad(p\ne j),
\]
and hence
\[
\sum_{p=1}^h d_r(x_p)=d_r(x_j)\le g-1.
\]
We now prove that $c_r=0$ for every $r\ge0$.
The assertion holds for $r=0$ since $c_0=0$. If $c_r=0$, then
\[
c_{r+1}
=\left\lfloor
\frac{\sum_{p=1}^h d_r(x_p)+c_r}{g}
\right\rfloor
=\left\lfloor\frac{d_r(x_j)}{g}\right\rfloor=0.
\]
Thus $c_r=0$ for every $r\ge0$ by induction.
By \eqref{carryeq}, we obtain
\[
d_r(n)=d_r(x_j)\qquad(r\in W_j,\ 1\le j\le h).
\]
Since $d_r(x_j)=0$ for $r\notin W_j$, it follows that
\[
x_j
=\sum_{r\in W_j}d_r(x_j)g^r
=\sum_{r\in W_j}d_r(n)g^r
=b_j
\qquad(1\le j\le h).
\]
Hence $\mathcal R_h(n;A)=1$.

\medskip
Next, we estimate the number of positive integers less than $g^m$
that do not belong to $U$.

Fix $m\ge1$. For $n\in\N$ with $1\le n<g^m$, we have $d_r(n)=0$ for every $r\ge m$.
Since $g-1>0$, the definition of $U$ shows that $n\notin U$
if and only if there exists $1\le j\le h$ such that
\[
d_r(n)\ne g-1\qquad(r\in W_j\cap[0,m)).
\]
For $1\le j\le h$, put
\[
B_j(m)=\{n\in\N:1\le n<g^m,\ d_r(n)\ne g-1
\text{ for every }r\in W_j\cap[0,m)\}.
\]
Then
\[
\{n\in\N:1\le n<g^m,\ n\notin U\}
=\bigcup_{j=1}^h B_j(m).
\]

Fix $1\le j\le h$. To count $B_j(m)\cup\{0\}$, write each
$n\in B_j(m)\cup\{0\}$ as
\[
n=d_0(n)+d_1(n)g+\cdots+d_{m-1}(n)g^{m-1}.
\]
If $r\in W_j\cap[0,m)$, then $d_r(n)$ can be any of
$0,\ldots,g-2$, giving $g-1$ choices at each of these
$|W_j\cap[0,m)|$ positions.
If $0\le r<m$ and $r\notin W_j$, then $d_r(n)$ can be any of
$0,\ldots,g-1$, giving $g$ choices at each of the remaining
$m-|W_j\cap[0,m)|$ positions.
By the multiplication principle and the uniqueness of the
$g$-adic representation,
\[
|B_j(m)|+1
=(g-1)^{|W_j\cap[0,m)|}g^{m-|W_j\cap[0,m)|}
=g^m\left(1-\frac1g\right)^{|W_j\cap[0,m)|}.
\]
Consequently,
\begin{equation}\label{exceptionbound}
\begin{aligned}
0&\le(g^m-1)-U(g^m-1)
=\left|\bigcup_{j=1}^h B_j(m)\right|
\le\sum_{j=1}^h|B_j(m)|\\
&=g^m\sum_{j=1}^h
\left(1-\frac1g\right)^{|W_j\cap[0,m)|}-h\le g^m\sum_{j=1}^h
\left(1-\frac1g\right)^{|W_j\cap[0,m)|}.
\end{aligned}
\end{equation}
\medskip
For $x\ge1$, choose $m\ge1$ such that $g^{m-1}\le x<g^m$.
Then
\[
0\le\lfloor x\rfloor-U(x)
\le(g^m-1)-U(g^m-1).
\]
Dividing by $x$ and using \eqref{exceptionbound} and $g^m/x\le g$, we obtain
\[
0\le\frac{\lfloor x\rfloor-U(x)}{x}
\le g\sum_{j=1}^h
\left(1-\frac1g\right)^{|W_j\cap[0,m)|}.
\]
Since every $W_j$ is infinite,
\[
\lim_{m\to\infty}|W_j\cap[0,m)|=\infty
\qquad(1\le j\le h).
\]
Because $h$ is fixed and $0<1-1/g<1$, the right-hand side tends
to zero as $x\to\infty$. Thus
\[
\lim_{x\to\infty}\frac{\lfloor x\rfloor-U(x)}{x}=0.
\]
Together with $\lfloor x\rfloor/x\to1$, this gives
\[
\lim_{x\to\infty}\frac{U(x)}{x}=1.
\]
\end{proof}

\begin{proof}[Proof of Corollary~\ref{quantitative}]
Fix $1\le i\le h$ and $a\in A_g(W_i)$. Choose $k,\ell,t_j,b_j$ as in \eqref{bj}, and choose an integer
\[M>\max_{j\ne i}t_j.\]
For $m\ge M$, put
\[F_m=\{r\in\N:M\le r<m,\ r\notin W_i\}.\]
For each $(\beta_r)_{r\in F_m}\in\{0,\ldots,g-1\}^{F_m}$ and each $j\ne i$, put
\[
b_j'=b_j+\sum_{r\in F_m\cap W_j}\beta_rg^r,
\qquad
n=a+\sum_{j\ne i}b_j'.
\]
Since $M>t_j>\ell$ for $j\ne i$, we have $b_j'\in A_g(W_j)$ and $0<n<g^m$. Moreover,
\[
d_r(n)=d_r(a)\quad(r\in W_i),\qquad
d_r(n)=g-1\quad(0\le r<k,\ r\notin W_i),
\]
and
\[
d_{t_j}(n)=g-1\qquad(j\ne i).
\]
Thus the hypotheses used in Steps~2 and~3 of the proof of Theorem~\ref{main} remain valid, with $b_j$ replaced by $b_j'$. Hence
\[
\mathcal R_h(n;A)=1,\qquad n\in E_a.
\]

For every $r\in F_m$, we have $d_r(n)=\beta_r$. Therefore distinct choices of
$(\beta_r)_{r\in F_m}$ give distinct integers $n$, and hence
\[
E_a(g^m)\ge g^{|F_m|}.
\]
Since $M\ge1$,
\[
|W_i\cap[M,m)|=W_i(m-1)-W_i(M-1),
\]
and therefore
\[
|F_m|=m-M-W_i(m-1)+W_i(M-1)
\ge m-W_i(m)-M+W_i(M-1).
\]
Consequently,
\[
E_a(g^m)
\ge g^{-M+W_i(M-1)}g^{m-W_i(m)}
\qquad(m\ge M).
\]
Thus \eqref{essentialbound} holds with
\[
c_a=g^{-M+W_i(M-1)}>0,\qquad m_a=M.
\]
\end{proof}

\begin{proof}[Proof of Corollary~\ref{balanced}]
By Lemma~\ref{basis}(b), for each $1\le i\le h$ there exist constants $c_{i,1},c_{i,2}>0$ such that
\[c_{i,1}x^{1/h} \le A_g(W_i)(x) \le c_{i,2}x^{1/h}\]
for all sufficiently large $x$. By Lemma~\ref{basis}(a), the sets $A_g(W_1),\ldots,A_g(W_h)$ are pairwise disjoint. Hence
\[A(x)=\sum_{i=1}^{h}A_g(W_i)(x).\]
Since $h$ is fixed, there exist constants $c_1,c_2>0$ such that
\[c_1x^{1/h}\le A(x)\le c_2x^{1/h}\]
for all sufficiently large $x$.

Fix $a\in A$. By Lemma~\ref{basis}(a), there is a unique $i\in\{1,\ldots,h\}$ such that
\[a\in A_g(W_i).\]
By Corollary~\ref{quantitative}, there exist $d_a>0$ and $m_a\in\N$ such that
\[E_a(g^m)\ge d_ag^{m-W_i(m)} \qquad(m\ge m_a).\]
Since
\[W_i(m)=\frac{m}{h}+O(1),\]
there exist $C>0$ and $m_0\in\N$ such that
\[W_i(m)\le\frac{m}{h}+C \qquad(m\ge m_0).\]
Thus, for $m\ge\max\{m_a,m_0\}$,
\[E_a(g^m) \ge d_ag^{-C}g^{m(1-1/h)}.\]

Let $x$ be sufficiently large, and choose $m$ such that
\[g^m\le x<g^{m+1}.\]
By monotonicity of $E_a(x)$, we have
$$E_a(x)\ge E_a(g^m)
\ge d_ag^{-C}g^{m(1-1/h)}
\ge d_ag^{-C-(1-1/h)}x^{1-1/h}.
$$
Hence there exists $c_a'>0$ such that
\[E_a(x)\ge c_a'x^{1-1/h}\]
for all sufficiently large $x$.

The upper bound
$A(x)\le c_2x^{1/h}$
shows that $A$ is thin. Moreover,
\[A(x)^{h-1}\le c_2^{h-1}x^{1-1/h},\]
and therefore
\[E_a(x) \ge \frac{c_a'}{c_2^{h-1}}A(x)^{h-1}\]
for all sufficiently large $x$. Replacing the constant on the right by a smaller positive constant if necessary, $A$ is strongly minimal.
\end{proof}


\begin{thebibliography}{99}
\bibitem{BaiSun2026}
X.L. Bai and C.F. Sun,
\emph{On $3$-adic asymptotic bases of order $h$},
Res. Number Theory \textbf{12} (2026), Paper No. 5.

\bibitem{Chen}
F.J. Chen and Y.G. Chen,
\emph{On minimal asymptotic bases},
European J. Combin. \textbf{32} (2011), 1329--1335.

\bibitem{ChenSandorYang2024}
S.Q. Chen, C. S\'andor and Q.H. Yang,
\emph{On a problem of Nathanson related to minimal asymptotic bases of order $h$},
C. R. Math. Acad. Sci. Paris \textbf{362} (2024), 71--76.

\bibitem{ChenTang2018}
Y.G. Chen and M. Tang,
\emph{On a problem of Nathanson},
Acta Arith. \textbf{185} (2018), 275--280.

\bibitem{ChenTang2026}
C. Chen and M. Tang,
\emph{On problems of Nathanson related to minimal asymptotic bases},
Publ. Math. Debrecen \textbf{109} (2026), 179--191.

\bibitem{ChenYan2026}
S.Q. Chen and X.Y. Yan,
\emph{On strongly minimal asymptotic bases of order $h$ generated by powers of $2$},
arXiv:2610.15416, 2026.

\bibitem{Fang2026}
J.H. Fang,
\emph{On minimal asymptotic bases},
J. Number Theory \textbf{287} (2026), 16--30.

\bibitem{JiaNathanson1989}
X.D. Jia and M.B. Nathanson,
\emph{A simple construction of minimal asymptotic bases},
Acta Arith. \textbf{52} (1989), 95--101.

\bibitem{Lee}
J.B. Lee,
\emph{A construction of minimal asymptotic bases},
Period. Math. Hungar. \textbf{26} (1993), 211--218.

\bibitem{LingTang2018}
D.R. Ling and M. Tang,
\emph{On minimal asymptotic bases of order three},
Colloq. Math. \textbf{151} (2018), 9--18.

\bibitem{Ling2015}
D.R. Ling and M. Tang,
\emph{On minimal asymptotic $g$-adic bases},
Bull. Aust. Math. Soc. \textbf{92} (2015), 374--379.

\bibitem{NathansonGadic}
M.B. Nathanson,
\emph{A new class of minimal asymptotic bases},
in: \emph{Combinatorial and Additive Number Theory V},
Springer Proc. Math. Stat. \textbf{395}, Springer, Cham, 2022,
263--276.

\bibitem{Nathanson1988}
M.B. Nathanson,
\emph{Minimal bases and powers of $2$},
Acta Arith. \textbf{49} (1988), 525--532.

\bibitem{SunCheng2022}
C.F. Sun and Z. Cheng,
\emph{On minimal asymptotic bases},
Math. Notes \textbf{111} (2022), 925--931.

\bibitem{SunTao2019}
C.F. Sun and T.T. Tao,
\emph{On $g$-adic minimal asymptotic bases of order $h$},
Int. J. Number Theory \textbf{15} (2019), 389--406.

\bibitem{TangLing2022}
M. Tang and D. R. Ling,
\emph{On asymptotic bases and minimal asymptotic bases},
Colloq. Math. \textbf{170} (2022), 65--77.
\end{thebibliography}
\end{document}